\numberwithin{equation}{section}
\newtheorem{theorem}{Theorem}[section]
\newtheorem{proposition}[theorem]{Proposition}
\theoremstyle{definition}
\newtheorem{definition}[theorem]{Definition}
\newtheorem{remark}[theorem]{Remark}
\let\OLDthebibliography\thebibliography
\renewcommand\thebibliography[1]{
  \OLDthebibliography{#1}
  \setlength{\parskip}{0pt}
  \setlength{\itemsep}{0pt plus 0.3ex}
}
\renewcommand{\epsilon}{\varepsilon}
\newcommand{\rmC}{\mathrm{C}}
\newcommand{\rmF}{\mathrm{F}}
\newcommand{\bbZ}{\mathbb{Z}}
\newcommand{\FQ}{\mathrm{FQ}}
\newcommand{\Gr}{\mathrm{Gr}}
\newcommand{\Orb}{\mathrm{Orb}}
\title{\bf Thompson's quandle}
\author{Markus Szymik}
\date{\mydate\today}
\begin{document}

\maketitle


We reveal that Thompson's group $F$ has a quandle refinement, and we establish some essential results about the originating quandle. 


Thompson's groups appear across mathematics in hundreds of publications devoted to them. Defying Stigler's law of eponymy, the group~$F$ first surfaced in Richard Thompson's work in logic on solvable and unsolvable problems in group theory~\cite{McKenzie+Thompson, Thompson}. Then, in homotopy theory, category theory, and shape theory, Dydak--Hastings~\cite{Dydak} and Freyd--Heller~\cite{Freyd--Heller:II}, independently, observed that the group~$F$ is universal among those with a free homotopy idempotent. Brown--Geoghegan~\cite{Brown--Geoghegan}~(see also~\cite{Zaremsky}) showed that the group~$F$ satisfies an interesting finiteness condition. Other occurrences of Thompson's groups are in surface mapping class groups and Teichm\"uller theory, see~\cite{Funar--Kapoudjian--Sergiescu, Penner}, and in connection with dynamic data storage in trees~\cite{Sleator--Tarjan--Thurston, Dehornoy:rotation}. We refer to the expositions of Cannon--Floyd--Parry~\cite{Cannon--Floyd--Parry,Cannon--Floyd} for standard facts. 

Quandles are self-distributive algebraic structures, and both Thompson's group and the self-distributivity equation have appeared in the work~\cite{Dehornoy:book,Dehornoy:Thompson} of Dehornoy. However, he connects the group~$F$ to the associative equation, and the self-distributive presentation of~$F$ seems to have eluded him. 
In a different vein, we can use quandles to classify knots, and we refer to the work of Vaughan Jones~\cite{Jones:1,Jones:2,Jones:3} for relations between knots and Thompson's group. 
Chouraqui~\cite{Chouraqui} studied relations between Thompson's group~$F$ and the Yang--Baxter equation.
Circling back to logical questions, Belk--McGrail~\cite{Belk--McGrail} discussed the word problem for quandles.

In Section~\ref{sec:def}, we first define Thompson's quandle~$P$ so that we can immediately prove its universal property in Section~\ref{sec:univ}. We use that result in Section~\ref{sec:orbits} to compute the set of orbits, which is finite, and deduce that~$P$ itself is infinite. We can then describe Thompson's quandle~$P$ as an infinite iterated~HHN extension. This requires, first, to give a definition of HNN extensions of quandles in general, and we will do both in Section~\ref{sec:HNN}. The next Section~\ref{sec:fingen} contains a proof that Thompson's quandle~$P$ is finitely presentable, followed by the result that we can recover Thompson's group~$F$ as the enveloping group~$\Gr(P)$ of Thompson's quandle~$P$ in Section~\ref{sec:F}, where we also show that the canonical map~\hbox{$P\to \Gr(P)\cong F$} is injective. In the final Section~\ref{sec:Alexander}, we compute the Alexander module of~$P$.


\section{A definition}\label{sec:def}

Recall that a quandle is a set~$Q$ together with a binary operation~$\rhd$ such that the left-multiplications~$y\mapsto x\rhd y$ are automorphisms for all~$x$,~i.e., they are bijective and satisfy~\hbox{$x\rhd(y\rhd z)=(x\rhd y)\rhd(x\rhd z)$}, and the automorphism~$x\mapsto x\rhd x$ has to be the identity. Quandles form an algebraic theory. Therefore, the category of quandles has all limits and colimits. In particular, we can define quandles in terms of generators and relations.

We define Thompson's quandle using infinitely many generators and infinitely many relations. This ensures that we can easily derive a universal property. Further down, we will show that Thompson's quandle is finitely presented~(see Proposition~\ref{prop:fin_gen}).

\begin{definition}
The quandle~$P$ with generators~$p_0,p_1,p_2,\dots$ and relations
\begin{equation}
p_j\rhd p_k=p_{k+1}\tag{$R_{j,k}$}
\end{equation}
for all~$j<k$ is {\it Thompson's quandle}.
\end{definition}


\section{A universal property}\label{sec:univ}

For every integer~$j\geqslant 1$, we denote by~$P_j$ the subquandle generated by the~$p_k$ for~$j\leqslant k$. This way, we obtain a chain
\[
P=P_0\geqslant P_1\geqslant P_2\geqslant\dots
\]
of subquandles. Shifting indices~$p_k\leftrightarrow p_{k-j}$ gives isomorphisms~\hbox{$P_j\cong P_0=P$} for all~$j$, and we see that all subquandles~$P_j$ are isomorphic to Thompson's quandle.

The subquandle~$P_1$ of Thompson's quandle~$P=P_0$ is the image of the endomorphism~\hbox{$\epsilon\colon P\to P,\,p_n\mapsto p_{n+1}$}, and the equation
\[
\epsilon^2(p)=p_0\rhd \epsilon(p)
\]
holds for all elements~$p$ in~$P$. In other words, the endomorphism~$\epsilon$ of~$P$ is idempotent up to an inner automorphism. 

\begin{remark}
We can write~$\epsilon$ as the composition of the isomorphism~$P=P_0\cong P_1$ followed by the inclusion~$P_1\subseteq P_0=P$. It follows that~$\epsilon$ is injective.
\end{remark}


Here is a universal property of Thompson's quandle.

\begin{proposition}
If~$Q$ is a quandle together with an endomorphism~\hbox{$\delta\colon Q\to Q$} and an element~$q_0\in Q$ such that~$\delta^2(q)=q_0\rhd \delta(q)$ for all~$q$ in~$Q$, then there is a unique morphism~$f\colon P\to Q$ of quandles that maps~$p_0$ to~$q_0$ and satisfies~\hbox{$\delta f=f\epsilon$}.\end{proposition}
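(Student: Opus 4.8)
The plan is to define $f$ on generators by $f(p_n) = \delta^n(q_0)$ and then verify that this assignment respects the defining relations $R_{j,k}$, so that it extends to a quandle morphism; uniqueness will follow because the $p_n$ generate $P$ and the conditions $f(p_0)=q_0$, $\delta f = f\epsilon$ force $f(p_n)=f(\epsilon^n(p_0))=\delta^n f(p_0)=\delta^n(q_0)$.

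First I would record the key consequence of the hypothesis $\delta^2(q) = q_0 \rhd \delta(q)$: applying it and using that $\delta$ is a quandle endomorphism, one gets by induction a formula for $\delta^{k+1}(q)$ in terms of $\delta^{j+1}(q_0)$ acting on $\delta^{k}(q)$ for any $j < k$. Concretely, the single relation with $q$ replaced by $\delta^{k-1}(q_0)$ already gives $\delta^{k+1}(q_0) = q_0 \rhd \delta^{k}(q_0)$; to handle general $j<k$ I would iterate, using self-distributivity to rewrite $q_0 \rhd (q_0 \rhd \cdots)$ and the fact that $\delta$ commutes with $\rhd$, aiming to prove $\delta^{j}(q_0) \rhd \delta^{k}(q_0) = \delta^{k+1}(q_0)$ for all $j<k$. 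This is exactly the relation $R_{j,k}$ transported along $f$, so once it is established the map extends.

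The cleanest route to that identity is probably to avoid a messy double induction by invoking the universal property abstractly: the free quandle on one generator, or rather the structure already built into $P$. Since $P_1 = \epsilon(P)$ and $\epsilon^2(p) = p_0 \rhd \epsilon(p)$ in $P$ itself, the elements $p_n = \epsilon^n(p_0)$ satisfy $R_{j,k}$ by construction, and what I really need is that the \emph{equations} $\delta^{j}(q_0)\rhd\delta^{k}(q_0)=\delta^{k+1}(q_0)$ are formal consequences of $\delta^2 = (q_0\rhd{-})\circ\delta$ together with the quandle axioms — a statement purely about words, hence it holds in $Q$ because it holds in the analogous situation abstractly. In practice I would just do the induction on $k-j$: the base case $k = j+1$ is the displayed identity $\delta^{k+1}(q_0)=q_0\rhd\delta^{k}(q_0)$ after noting $\delta^{j}(q_0)=\epsilon$-image reasoning gives $\delta^{j+1}(q_0)=\delta^{j-1}$-shift; for the inductive step I apply $\delta$ to the relation $R_{j-1,k-1}$-analogue and combine with the base identity.

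The main obstacle will be the bookkeeping in that induction: one must be careful that $\delta$ being only an endomorphism (not an automorphism) is enough, and that the inner automorphism $q_0 \rhd {-}$ interacts correctly with the shifts — in particular one uses $\delta(q_0 \rhd x) = \delta(q_0)\rhd \delta(x)$ repeatedly, and must track that $\delta^{j}(q_0)\rhd{-}$ equals the appropriate conjugate. Once the relations are verified, extension to a morphism $f\colon P\to Q$ is automatic from the presentation of $P$, $f(p_0)=q_0$ is immediate, and $\delta f(p_n)=\delta^{n+1}(q_0)=f(p_{n+1})=f(\epsilon(p_n))$ gives $\delta f = f\epsilon$ on generators, hence everywhere. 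Uniqueness is the short argument already sketched.
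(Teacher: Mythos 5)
Your proposal is correct and takes essentially the same route as the paper: define $f(p_n)=\delta^n(q_0)$, obtain the relation $R_{j,k}$ by applying the hypothesis to $q=\delta^{k-j-1}(q_0)$ (giving $q_0\rhd\delta^{k-j}(q_0)=\delta^{k-j+1}(q_0)$) and then applying the endomorphism $\delta$ a further $j$ times, and verify $\delta f=f\epsilon$ and uniqueness on generators. The only quibbles are cosmetic: the induction you describe is really on $j$ (base case $j=0$, step given by applying $\delta$ once), not on $k-j$, and the detour through an abstract universal-property argument is unnecessary since the direct computation is already complete.
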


\begin{proof}
Let~$\delta$ and~$q_0$ be given as in the statement. We want to define
\[
f(p_j)=\delta^j(q_0)
\]
on the generators~$p_j$ of~$P$. To see that these elements of~$Q$ satisfy the required relations, we have to show that, for all~$j<k$, we have~$f(p_j)\rhd f(p_k)=f(p_{k+1})$, or~\hbox{$\delta^j(q_0)\rhd \delta^k(q_0)=\delta^{k+1}(q_0)$}. However, we already know that~$k-j\geqslant 1$, so
\[
q_0\rhd \delta^{k-j}(q_0)=\delta^{k-j+1}(q_0)
\]
by assumption~$\delta^2(q)=q_0\rhd \delta(q)$ on~$\delta$ and~$q_0$, applied to~$q=\delta^{k-j-1}(q_0)$. Applying~$\delta$ for~$j$ more times, we get
\[
\delta^j(q_0)\rhd \delta^k(q_0)=\delta^{k+1}(q_0),
\]
which is precisely what we need to define the morphism~$f\colon P\to Q$. We can then verify that it satisfies the relation~\hbox{$\delta f=f\epsilon$} by checking this equation on the generators~$p_j$ of~$P$: we have
\[
\delta f(p_j)=\delta^{j+1}(q_0)=f(p_{j+1})=f\epsilon(p_j),
\]
as required.
\end{proof}


\section{Orbits}\label{sec:orbits}

Recall that the functor that sends a quandle~$Q$ to its set~$\Orb(Q)$ of orbits is left adjoint to the functor that sends a set~$S$ to the trivial quandle on~$S$, where~\hbox{$x\rhd y=y$} for all~$x,y$ in~$S$. Specifically, we can define~$\Orb(Q)$ to be the set of equivalence classes~$[x]$ for the equivalence relation on~$Q$ generated by~$[x\rhd y]=[y]$.

\begin{proposition}\label{prop:orbits}
Thompson's quandle~$P$ has exactly two orbits, namely~$[p_0]$ and~$[p_1]=[p_2]=\dots$.
\end{proposition}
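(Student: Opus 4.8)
The plan is to prove the two claims separately: first that $[p_j] = [p_k]$ for all $j,k \geqslant 1$, so that the elements $p_1, p_2, \dots$ all lie in a single orbit, and second that this orbit is genuinely distinct from $[p_0]$, so that there are exactly two orbits and not one.

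For the first claim, I would use the defining relations directly. The relation $R_{j,k}$ says $p_j \rhd p_k = p_{k+1}$, which gives $[p_{k+1}] = [p_k]$ in $\Orb(P)$ for every $k$ with some $j < k$ available — and since $k \geqslant 1$ we may take $j = 0$, so $[p_{k+1}] = [p_k]$ for all $k \geqslant 1$. Chaining these equalities yields $[p_1] = [p_2] = [p_3] = \cdots$. Since $P$ is generated by $p_0, p_1, p_2, \dots$ and every element of $P$ is obtained from generators by the quandle operations, each orbit $[x]$ equals $[p_j]$ for some $j$ (the operation $x \rhd y$ never creates a new orbit: $[x \rhd y] = [y]$). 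Hence $\Orb(P)$ has at most two elements, $[p_0]$ and $[p_1]$.

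For the second claim — that $[p_0] \neq [p_1]$ — I would exhibit a quandle with two orbits receiving $P$, using the universal property from Section~\ref{sec:univ}. The cleanest target is the two-element trivial quandle $T = \{a, b\}$ with $x \rhd y = y$: take $q_0 = a$ and $\delta \colon T \to T$ the constant map $\delta(x) = b$. Then $\delta^2(q) = b = q_0 \rhd \delta(q)$ holds trivially (trivial quandle), so the Proposition of Section~\ref{sec:univ} supplies a quandle morphism $f \colon P \to T$ with $f(p_0) = a$ and $f(p_j) = \delta^j(a) = b$ for all $j \geqslant 1$. A quandle morphism descends to a map of orbit sets $\Orb(P) \to \Orb(T) = T$, and this map sends $[p_0] \mapsto a$ and $[p_1] \mapsto b$; since $a \neq b$, we conclude $[p_0] \neq [p_1]$ in $\Orb(P)$.

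The main obstacle is not really either half in isolation but making sure the reduction in the first paragraph is airtight: I must argue that no element of $P$ — not just no generator — can lie in a third orbit, which comes down to the fact that $\Orb(-)$ is a functor (indeed a left adjoint, as recalled in the text) and that the generators already surject onto $\Orb(P)$ because $[x \rhd y] = [y]$. Verifying the hypotheses of the universal property for the trivial target quandle $T$ is the easy part, since every equation holds in a trivial quandle; one just has to be slightly careful that $\delta$ really is a quandle endomorphism (a constant map into a trivial quandle automatically is). Putting the two halves together gives exactly the stated description of $\Orb(P)$.
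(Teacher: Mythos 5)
Your proof is correct, but it is organised differently from the paper's. The paper argues in one stroke via the adjunction recalled at the start of Section~\ref{sec:orbits}: it shows that quandle morphisms $P\to S$ into an \emph{arbitrary} trivial quandle $S$ are in bijection with pairs of elements of $S$, the bijection being evaluation at $p_0$ and $p_1$; since $\Orb$ is left adjoint to the trivial-quandle functor, this identifies $\Orb(P)$ with a two-element set whose elements are $[p_0]$ and $[p_1]$. You instead split the statement into an upper and a lower bound. Your upper bound ($[p_{k+1}]=[p_k]$ for $k\geqslant1$ from the relations $R_{0,k}$, plus the fact that generators exhaust the orbit set) makes explicit what the paper compresses into the phrase that the construction ``is inverse to the evaluation''; your appeal to $\Orb$ being a left adjoint is the right way to handle the exhaustion point (note that elements of $P$ are built using $\rhd^{-1}$ as well as $\rhd$, but $[x\rhd^{-1}y]=[y]$ too, and the left-adjoint argument sidesteps this entirely). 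Your lower bound produces essentially the same separating morphism as the paper, specialised to the two-element trivial quandle, except that you obtain it from the universal property of Section~\ref{sec:univ} with $\delta$ the constant map at $b$ rather than by checking the relations $R_{j,k}$ directly --- a legitimate shortcut and a nice reuse of that universal property, though the direct check is equally immediate since every relation in a trivial quandle reduces to an equality of right-hand arguments. Both routes are sound: the paper's is more compact and yields the full corepresentability statement for all trivial targets at once, while yours separates the two logical halves (``at most two'' versus ``at least two'') more transparently.
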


\begin{proof}
Equivalently, there is a canonical bijection between the set of morphisms~\hbox{$P\to S$} into a trivial quandle~$S$ and the set of all pairs of elements of~$S$, given by evaluating a morphism~$f$ at~$p_0$ and~$p_1$. Given~$s$ and~$t$ in~$S$, we define a morphism~$f\colon P\to S$ on generators by
\[
f(p_j)=
\begin{cases}
s & j = 0,\\
t & j\geqslant 1.
\end{cases}
\]
This assignment satisfies the requied relations, as for~$j<k$ we have~$k\geqslant 1$, and therefore
\[
f(p_j)\rhd f(p_k)=f(p_k)=t=f(p_{k+1}).
\]
This construction is inverse to the evaluation at~$p_0$ and~$p_1$.
\end{proof}

\begin{remark}\label{rem:P_infinite_1}
If we think of the quotient map~$P\to\Orb(P)$ as the universal morphism into a trivial quandle, we see that all of~$P_1$ is mapped to the element~$[p_1]$, whereas~$p_0$ is mapped to~$[p_0]\not=[p_1]$. It follows that~$p_0\in P_0\setminus P_1$, and by induction, we get~\hbox{$p_j\in P_j\setminus P_{j+1}$}. This shows that the elements~$p_j$ of~$P$ are all different, and~$P$ is infinite. Later, we shall see another argument for that in Remark~\ref{rem:P_infinite_2}.
\end{remark}


\section{HNN extensions}\label{sec:HNN}

We will now see how we can write Thompson's quandle as an infinite iterated HNN extension. HNN extensions of{\it~groups} are known well~\cite{HNN}, but as I am unaware of a definition of an HNN extension of{\it~quandles} in the literature, here is one:

\begin{definition}
If~$Q=\langle\,S\,|\,R\,\rangle$ is a quandle, and~$\tau\colon U\to V$ is an isomorphism between subquandles~$U$ and~$V$ of~$Q$, then 
\[
Q*\tau=\langle\,S\cup\{t\}\,|\,R\cup\{t\rhd u=\tau(u)\,|\,u\in U\}\,\rangle
\]
is the {\it HNN extension} of~$Q$ by~$\tau$.
\end{definition}

The HNN extension comes with a canonical morphism~$Q\to Q\star\tau$ of quandles with the property that the image of every~$p\in P\subseteq Q$ in~$Q\star\tau$ is conjugate to the image of~$\tau(p)$ in~$Q\star\tau$, even if this is not the case in~$Q$.

We can apply this now to Thompson's quandle~$P$. For the isomorphism~$\tau$ between subquandles we choose the isomorphism~\hbox{$P_1\cong P_2$} that sends~$p_j$ to~$p_{j+1}$. Then the HNN extension~\hbox{$P_1*\tau$} has a new generator we can denote by~$p_0$, and we find that~\hbox{$P_1*\tau\cong P_0$} is Thompson's quandle. Iterating the process, we get
\[
P=P_0\cong P_1\star\tau\cong P_2\star\tau\star\tau\cong P_3\star\tau\star\tau\star\tau\cong\dots,
\]
or~$P\star\tau\cong P$.

\begin{remark}
We have now seen HNN extension of quandles, and it is clear that amalgamated products~(pushouts) of quandles exist for formal reasons. Therefore, it seems plausible that there is a Bass--Serre theory of quandles that imitates the one for groups~(see~\cite{Serre}).
\end{remark}


\section{A finite presentation}\label{sec:fingen}

In this section, we show that Thompson's quandle admits a presentation with finitely many generators and finitely many relations.

\begin{proposition}\label{prop:fin_gen}
Thompson's quandle is isomorphic to the quandle defined by two generators, say~$a$ and~$b$, and two relations
\begin{gather}
a\rhd(a\rhd b)=b\rhd(a\rhd b),\label{first}\\
a\rhd(a\rhd(a\rhd b))=b\rhd(a\rhd(a\rhd b)).\label{second}
\end{gather}
\end{proposition}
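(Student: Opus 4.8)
The plan is to exhibit mutually inverse morphisms between Thompson's quandle $P$ and the quandle $Q=\langle\,a,b\,|\,\eqref{first},\eqref{second}\,\rangle$. The natural guess is $a\mapsto p_0$ and $b\mapsto p_1$, which suggests building a morphism $Q\to P$ first and then a morphism $P\to Q$ going the other way. For the map $Q\to P$, I would check that $p_0$ and $p_1$ satisfy the two relations \eqref{first} and \eqref{second} in $P$. Using the defining relations $R_{j,k}$, one computes $p_0\rhd p_1=p_2$, then $p_0\rhd(p_0\rhd p_1)=p_0\rhd p_2=p_3$ and $p_1\rhd(p_0\rhd p_1)=p_1\rhd p_2=p_3$, so \eqref{first} holds; similarly $p_0\rhd(p_0\rhd(p_0\rhd p_1))=p_0\rhd p_3=p_4=p_1\rhd p_3=p_1\rhd(p_0\rhd(p_0\rhd p_1))$, giving \eqref{second}. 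Hence there is a well-defined morphism $g\colon Q\to P$ with $g(a)=p_0$, $g(b)=p_1$.

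For the reverse direction I would use the universal property (Proposition, Section~\ref{sec:univ}). Set $q_0=a$ in $Q$ and define $\delta\colon Q\to Q$ by $\delta(x)=a\rhd x$; this is the inner automorphism given by left multiplication by $a$, hence an endomorphism (indeed automorphism) of $Q$. I must verify the hypothesis $\delta^2(x)=q_0\rhd\delta(x)$, i.e. $a\rhd(a\rhd x)=a\rhd(a\rhd x)$ — wait, that is automatic, so in fact any inner automorphism works and I should instead arrange $\delta$ so that $\delta(a)=b$. The cleaner choice: define $\delta$ to be conjugation by $a$ only after checking it sends $a$ to $b$; but $a\rhd a=a$, not $b$. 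So this needs more care: I want an endomorphism $\delta$ of $Q$ with $\delta(a)=b$ and $\delta(b)=a\rhd b$, matching $p_0\mapsto p_1$, $p_1\mapsto p_2=p_0\rhd p_1$. The relation to check is $\delta^2(x)=a\rhd\delta(x)$ for all $x$; since $Q$ is generated by $a,b$ and both sides are quandle endomorphisms in $x$... actually they are not endomorphisms, so I would verify the identity on the generators $a$ and $b$ and argue it propagates, or — more robustly — reinterpret $\delta$ as "apply $g$, shift, then come back", i.e. show directly that the assignment $a\mapsto b$, $b\mapsto a\rhd b$ respects \eqref{first} and \eqref{second} (another finite check using the relations), yielding $\delta\colon Q\to Q$, and separately verify $\delta^2(a)=a\rhd b=a\rhd\delta(a)$ and the analogous identity at $b$. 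Granting this, the universal property produces $f\colon P\to Q$ with $f(p_0)=a$ and $f\epsilon=\delta f$, so $f(p_j)=\delta^j(a)$; in particular $f(p_1)=b$.

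Finally I would check $fg=\mathrm{id}_Q$ and $gf=\mathrm{id}_P$. The first is immediate on generators: $fg(a)=f(p_0)=a$ and $fg(b)=f(p_1)=b$. For the second, $gf(p_j)=g(\delta^j(a))$, and one checks by induction that $g\circ\delta=\epsilon\circ g$ (it suffices to check on $a,b$: $g\delta(a)=g(b)=p_1=\epsilon(p_0)=\epsilon g(a)$, and $g\delta(b)=g(a\rhd b)=p_0\rhd p_1=p_2=\epsilon(p_1)=\epsilon g(b)$), whence $gf(p_j)=g\delta^j(a)=\epsilon^j g(a)=\epsilon^j(p_0)=p_j$. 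Since both composites are identities on generators, they are the identity. The main obstacle is the middle step: producing the endomorphism $\delta$ of $Q$ with the right behaviour and verifying the twisted-idempotency hypothesis $\delta^2(x)=a\rhd\delta(x)$ — the subtlety being that this identity is quantified over all $x\in Q$, not just generators, so one must either show the two sides agree as a consequence of the presentation (reducing to a finite check via the relations \eqref{first}, \eqref{second}) or find a slicker description of $\delta$; everything else is routine manipulation with the quandle axioms and the relations $R_{j,k}$.
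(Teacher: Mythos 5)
Your construction of $g\colon Q\to P$ matches the paper's, but for the harder direction $P\to Q$ you take a genuinely different route: the paper defines $q_0=a$, $q_1=b$, $q_n=q_{n-2}\rhd q_{n-1}$ and verifies \emph{all} relations $(R_{j,k})$ among the $q_n$ by a two-stage induction (first deriving $(R_{0,k})$ and $(R_{1,k})$ from $(R_{j,k-2})\wedge(R_{j,k-1})\Rightarrow(R_{j,k})$, then reducing general $(R_{j,k})$ to these via self-distributivity), whereas you invoke the universal property of Section~\ref{sec:univ}, which packages exactly that induction into the single hypothesis $\delta^2(x)=a\rhd\delta(x)$. This is an attractive economy --- it is arguably what the universal property is \emph{for} --- but note two points. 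First, your hesitation about whether ``both sides are endomorphisms in $x$'' resolves in your favour: $\delta^2$ is a composite of endomorphisms, and $x\mapsto a\rhd\delta(x)$ is $\delta$ followed by the inner automorphism $y\mapsto a\rhd y$, so both sides are quandle morphisms $Q\to Q$ and it is legitimate to check the identity only on the generators $a,b$ (where it holds by definition of $\delta$ at $a$, and by relation~\eqref{first} at $b$). Second, the step you defer --- that $a\mapsto b$, $b\mapsto a\rhd b$ respects \eqref{first} and \eqref{second}, so that $\delta$ exists at all --- is a genuine finite computation, not a triviality: the image of \eqref{first} under this substitution is (modulo \eqref{first}) exactly \eqref{second}, but the image of \eqref{second} reduces to $q_1\rhd q_4=q_2\rhd q_4$ in the paper's notation, which requires first deriving $(R_{0,3})$, $(R_{0,4})$, and $(R_{1,4})$ from the two given relations by the same kind of self-distributivity manipulations the paper uses. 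So your approach works and is cleaner in structure, but it does not escape a small amount of the paper's relation-chasing; once that check is written out, the rest (the commutation $g\delta=\epsilon g$ on generators and the identification of the two composites on generators) is complete and correct.
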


\begin{proof}
Let~$Q$ be the quandle defined in the statement. Inside this quandle, we define a sequence~$q_0,q_1,q_2,\dots$ of elements inductively by setting~\hbox{$q_0=a$},~\hbox{$q_1=b$}, and
\begin{equation}\label{eq:q_relation}
q_n=q_{n-2}\rhd q_{n-1}
\end{equation}
for all~$n\geqslant 2$. We shall show below that there are quandle morphisms~\hbox{$f\colon P\to Q$} and~\hbox{$g\colon Q\to P$} with the property that~$f(p_n)=q_n$ and~$g(q_n)=p_n$. Then~$f$ and~$g$ are inverse isomorphisms.

As for~$g$, we would like to define it by~$g(a)=p_0$ and~$g(b)=p_1$.

We must check that~$p_0$ and~$p_1$ satisfy the relations~\eqref{first} and~\eqref{second}. From~($R_{0,2}$) and the definitions, we immediately get
\[
a\rhd(a\rhd b)=q_0\rhd(q_0\rhd q_1)=q_0\rhd q_2=q_3=q_1\rhd q_2=b\rhd(a\rhd b),
\] 
which is~\eqref{first}. In fact, these two relations are equivalent. Using them, we can show that also~($R_{1,3}$) and~\ref{second} are equivalent:
\begin{align*}
b\rhd(a\rhd(a\rhd b))
&=q_1\rhd q_3=q_4
=q_2\rhd q_3\\
&=(q_0\rhd q_1)\rhd(q_0\rhd q_2)\\
&=q_0\rhd(q_1\rhd q_2)\\
&=a\rhd(b\rhd(a\rhd b))\\
&=a\rhd(a\rhd(a\rhd b)).
\end{align*}
Now, we have a morphism~$g$, but so far we only know~$g(q_n)=p_n$ for~$n=0,1$. For~$n\geqslant2$, this relation follows by induction:
\[
g(q_n)=g(q_{n-2}\rhd q_{n-1})=g(q_{n-2})\rhd g(q_{n-1})=p_{n-2}\rhd p_{n-1}=p_n.
\]

To define~$f$, we have no choice but to set~$f(p_n)=q_n$, and we need to verify that these elements satisfy the relations~($R_{j,k}$) for all~$j<k$. 

We note that the definition, namely equation~\eqref{eq:q_relation}, gives~($R_{j,j+1}$) for all~$j$, and we can see as above that~($R_{0,2}$) and~($R_{1,3}$) are equivalent to~\eqref{first} and~\eqref{second}, respectively.

Now, we show that the relations~($R_{j,k-2}$) and~($R_{j,k-1}$) together imply the relation~($R_{j,k}$), whenever~$j<k-2$:
\[
q_j\rhd q_{k}
=q_j\rhd(q_{k-2}\rhd q_{k-1})
=(q_j\rhd q_{k-2})\rhd(q_j\rhd q_{k-1})
=q_{k-1}\rhd q_k=q_{k+1}.
\]
As we already know~($R_{0,1}$),~($R_{0,2}$),~($R_{1,2}$), and~($R_{1,3}$), we get~($R_{0,k}$) and~($R_{1,k}$) for all~$k$ for which these relations make sense, by induction.

Finally, we show that~($R_{0,j-1}$),~($R_{0,k-1}$),~($R_{0,k}$), and~($R_{j-1,k-1}$) imply~($R_{j,k}$) whenever~$1<j<k$:
\[
q_j\rhd q_k
=(q_0\rhd q_{j-1})\rhd(q_0\rhd q_{k-1})
=q_0\rhd(q_{j-1}\rhd q_{k-1})
=q_0\rhd q_k
=q_{k+1}.
\]
By induction, this implication gives~($R_{j,k}$) for all~$j<k$, finishing the verification that also the morphism~$f$ is well-defined.
\end{proof}


\section{The enveloping group}\label{sec:F}

If~$Q$ is any quandle, its {\it enveloping group}~$\Gr(Q)$ is defined as the group with generating set~$\{g_x\mid x\in Q\}\cong Q$ and relations~$g_{x\rhd y}=g_xg_y(g_x)^{-1}$ for all~$x,y\in Q$. This group comes with a canonical map~$Q\to\Gr(Q)$, which sends~$x$ to~$g_x$ and is both a morphism of quandles if the quandle structure on~$\Gr(Q)$ is given by conjugation, and~$\Gr(Q)$--equivariant when the group~$\Gr(Q)$ acts on the quandle~$Q$ via the quandle structure~(so that~$g_x\cdot y=x\rhd y$) and on the group~$\Gr(Q)$ itself by conjugation. The canonical map need not be injective in general. 

In this section, we explain that the enveloping group of Thompson's quandle~$P$ is Thompson's group~$F$, and we show that the canonical map~$P\to\Gr(P)\cong F$ is injective.

\begin{proposition}
The enveloping group of Thompson's quandle is Thompson's group~$F$.
\end{proposition}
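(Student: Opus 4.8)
The plan is to compute $\Gr(P)$ directly from a presentation of $P$ and recognize the resulting group as one of the standard presentations of Thompson's group $F$. Since the enveloping-group functor $\Gr$ sends a quandle presentation $\langle\,S\mid R\,\rangle$ to the group presentation with the same generators $S$ and with each relation $x\rhd y = z$ replaced by $g_x g_y g_x^{-1} = g_z$, we have two presentations of $P$ to choose from: the infinite one from the definition, or the finite two-generator presentation of Proposition~\ref{prop:fin_gen}. I would run the argument from the infinite presentation, because the resulting group presentation
\[
\Gr(P) = \langle\, g_0, g_1, g_2, \dots \mid g_j g_k g_j^{-1} = g_{k+1}\ \text{for all } j<k \,\rangle
\]
visibly resembles the infinite presentation of $F$ due to Brown,
\[
F = \langle\, x_0, x_1, x_2,\dots \mid x_j^{-1} x_k x_j = x_{k+1}\ \text{for all } j<k \,\rangle,
\]
and the identification is then essentially a change of variables.

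**Key steps.** First I would write down $\Gr(P)$ from the infinitary presentation exactly as above. Second, I would match it with the standard infinite presentation of $F$; the relation $g_j g_k g_j^{-1} = g_{k+1}$ for $j<k$ becomes $x_j^{-1} x_k x_j = x_{k+1}$ under the substitution $g_j \leftrightarrow x_j^{-1}$ (conjugation by $g_j$ versus conjugation by $x_j^{-1}=x_j$-inverse), so one should check the direction of conjugation carefully and, if needed, replace $g_j$ by $x_j$ or $x_j^{-1}$ to land exactly on Brown's relations rather than their inverses. Third, I would recall (citing \cite{Cannon--Floyd--Parry} or \cite{Brown--Geoghegan}) that this infinite presentation does define $F$, and that in fact only the relations with $j=0$ and $j=1$ (together) suffice — which dovetails with the finite presentation in Proposition~\ref{prop:fin_gen}, giving a consistency check via $\Gr$ applied to the two-generator presentation. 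As a sanity check I would confirm that the two defining relations $a\rhd(a\rhd b)=b\rhd(a\rhd b)$ and $a\rhd(a\rhd(a\rhd b))=b\rhd(a\rhd(a\rhd b))$ of Proposition~\ref{prop:fin_gen} map under $\Gr$ to the two standard finite relations of $F$ in generators $x_0 = g_a$, $x_1 = g_b$.

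**Main obstacle.** The only real subtlety is bookkeeping with conjugation conventions: the enveloping group uses $g_{x\rhd y} = g_x g_y g_x^{-1}$, whereas Thompson's $F$ is usually presented with $x_{k+1} = x_j^{-1} x_k x_j$, so one must be careful about whether the natural isomorphism sends $g_j$ to $x_j$ or to $x_j^{-1}$, and make sure the infinitely many relations transform consistently under whichever choice is made. Everything else is formal: $\Gr$ is defined by a presentation, so the identification is a matter of exhibiting mutually inverse homomorphisms on generators and checking relations, which the index arithmetic in the relations makes routine. I would close by remarking that, combined with Proposition~\ref{prop:fin_gen}, this also re-proves the classical fact that $F$ is finitely presented, now as a shadow of the quandle-level statement.
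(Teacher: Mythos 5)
Your argument is correct, and it follows the same basic template as the paper's proof --- apply the fact that the enveloping-group functor turns a quandle presentation into the group presentation with the same generators and with $x\rhd y$ read as conjugation, then recognise the result as a standard presentation of $F$ --- but you instantiate it with a different presentation. The paper feeds the \emph{finite} two-generator presentation of Proposition~\ref{prop:fin_gen} into $\Gr$ and matches the output against the finite presentation of $F$ from \cite[(1.9)]{Brown--Geoghegan}; you feed in the defining infinitary presentation $\langle\, p_0,p_1,\dots \mid p_j\rhd p_k=p_{k+1},\ j<k\,\rangle$ and match against the infinite presentation $\langle\, x_0,x_1,\dots \mid x_j^{-1}x_kx_j=x_{k+1},\ j<k\,\rangle$ of \cite{Cannon--Floyd--Parry}. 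Your route has the advantage of being independent of Proposition~\ref{prop:fin_gen}, so the identification $\Gr(P)\cong F$ is available straight from the definition of $P$; the paper's route keeps the bookkeeping finite and leans on the work already done in Section~\ref{sec:fingen}. Your flagged subtlety is real but harmless and you resolve it correctly: the enveloping group's relation $g_jg_kg_j^{-1}=g_{k+1}$ matches $x_j^{-1}x_kx_j=x_{k+1}$ under $g_j\mapsto x_j^{-1}$ (and in any case the two presentations define isomorphic groups via $g\mapsto g^{-1}$). One small caveat for your closing remark: deducing that $F$ is finitely presented from Proposition~\ref{prop:fin_gen} plus your identification is fine, but it is not logically independent of the classical fact, since the recognition step itself cites a known presentation of $F$.
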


\begin{proof}
The enveloping group of Thompson's quandle has the presentation
\[
\langle a, b\,|\,a\rhd(a\rhd b)=b\rhd(a\rhd b),\text{ }a\rhd(a\rhd(a\rhd b))=b\rhd(a\rhd(a\rhd b))\rangle,
\]
where~$g\rhd h=ghg^{-1}$ is conjugation. It now follows from the standard presentations of Thompson's group~$F$, such as the one given in~\cite[(1.9)]{Brown--Geoghegan}, 
that the group with this presentation is isomorphic to~$F$.
\end{proof}

\begin{remark}\label{rem:P_infinite_2}
Incidentally, the preceding proposition gives another way to argue that Thompson's quandle~$P$ is infinite~(as noted earlier in Remark~\ref{rem:P_infinite_1}). There is a commutative diagram
\[
\xymatrix{
\FQ(2)\ar[d]\ar[r] & \rmF(2)\ar[d]\\
P\ar[r] & F.
}
\]
The arrow at the top is the canonical map from the free quandle~$\FQ(2)$ on two generators to the free group~$\rmF(2)$ on two generators. This map is known to be injective, with the image being the set of conjugates of the generators. Similarly, the arrow at the bottom is the canonical map.~(We have yet to show that it is injective.) The vertical arrows are given by the two canonical generators of Thompson's quandle~$P$ and Thompson's group~$F$, respectively. These two vertical maps are surjective by construction. It follows that the image of~$P$ in~$F$ is given by the set of conjugates of the two generators in~$F$, which is infinite~(see~\cite{Belk--Matucci:1}, for instance, and the proof of Proposition~\ref{prop:injective} below).
\end{remark}

\begin{remark}
Thompson's groups~$T$ and~$V$ cannot be the enveloping groups of quandles. If~$Q$ is any non-empty quandle, then its enveloping group comes with a~(split) surjection onto an infinite cyclic group, induced by the quandle morphism~$Q\to\star$ to the terminal quandle with one element.  However, the groups~$T$ and~$V$ are simple~(in the sense of group theory) and, therefore, do not admit such a surjection.
\end{remark}

\begin{proposition}\label{prop:injective}
The canonical map~$P\to\Gr(P)\cong F$ embeds Thompson's quandle into Thompson's group.
\end{proposition}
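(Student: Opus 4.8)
The plan is to analyze the image of the canonical map $P\to F$ explicitly and show it is injective by exploiting the universal property (Proposition in Section~\ref{sec:univ}) together with a concrete model of Thompson's group. First I would recall, as already noted in Remark~\ref{rem:P_infinite_2}, that the image of $P$ in $F$ is exactly the set of conjugates of the two standard generators; equivalently, the elements $p_n$ map to a sequence $x_n$ in $F$ satisfying $x_n \rhd x_m = x_{m+1}$ for $n<m$ under conjugation, where $x_n$ are the usual infinite generating set $\{x_0, x_1, x_2, \dots\}$ of $F$ with relations $x_k^{-1}x_n x_k = x_{n+1}$ for $k<n$. So the map $P\to F$ sends $p_n$ to the generator $x_n$ of this standard infinite presentation, and the content of the statement is that these $x_n$, together with all their conjugates arising inside $P$, are pairwise distinct in $F$ exactly when the corresponding elements of $P$ are distinct.

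The key step is to produce an inverse, or at least a retraction, on the image. I would use the description of $P$ via the endomorphism $\epsilon$ and element $p_0$: by the universal property, to define a quandle morphism $F_{\mathrm{conj}} \supseteq (\text{image}) \to P$ it suffices to understand how conjugation by elements of $F$ permutes the set $\{x_n : n\geq 0\}$ and their $F$-orbit. Concretely, every element in the image of $P$ is of the form $w x_n w^{-1}$ for some $w\in F$ and $n\geq 0$; I would show that such an element determines $n$ and the relevant data unambiguously by using a homomorphism $F\to \bbZ$ (the abelianization composed with a projection, or the exponent-sum map) to read off information, combined with the normal form theory for $F$. The cleanest route is probably to use that $F$ acts faithfully on the real line (or the dyadic rationals) by the standard piecewise-linear action: each generator $x_n$ is a specific PL homeomorphism, each conjugate $wx_nw^{-1}$ is determined by its fixed-point set and germ behavior, and one checks directly that distinct elements $p, p'$ of $P$ go to PL homeomorphisms that differ. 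Since we already know from Remark~\ref{rem:P_infinite_1} that the $p_n$ are distinct and that $p_j \in P_j\setminus P_{j+1}$, and since by Proposition~\ref{prop:fin_gen} all of $P$ is generated by $p_0, p_1$, it is enough to track the images under conjugation carefully.

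The cleanest self-contained argument, which I would actually write, goes through the known structure of the image. It is a classical fact (see Belk--Matucci~\cite{Belk--Matucci:1}) that the set of conjugates of the standard generators of $F$, with the conjugation quandle structure, is itself isomorphic to a quandle that one can identify; I would show this quandle satisfies the same universal property as $P$ (it carries an endomorphism "shift", namely conjugation by a suitable element implementing $x_n\mapsto x_{n+1}$, and a distinguished element $x_0$, with the idempotency-up-to-inner relation), hence receives a map from $P$; combined with surjectivity of $P$ onto it and the universal property, the two maps are mutually inverse, giving injectivity. The main obstacle will be verifying that the shift relation $\delta^2 = x_0\rhd \delta$ holds in this conjugation quandle with the correct $\delta$, i.e. finding the right element of $F$ conjugating $x_n$ to $x_{n+1}$ uniformly and checking the composite relation — this amounts to a computation inside $F$ using the relations $x_k^{-1}x_nx_k = x_{n+1}$, and ensuring no two conjugates collapse requires the faithful PL action or the solvability of the conjugacy problem in $F$. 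I expect that step, reconciling the internal shift on conjugates with $\epsilon$ and then ruling out collapses, to be the crux.
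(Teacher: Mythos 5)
Your proposal correctly identifies the image of $P$ in $F$ as the union of the conjugacy classes of the two generators, but neither of your two strategies actually closes the argument, and the step the paper's proof turns on is absent. The universal-property route cannot work as described: the proposition in Section~\ref{sec:univ} only produces morphisms \emph{out of} $P$, so verifying that the conjugation quandle on the image carries a shift $\delta$ and a basepoint satisfying $\delta^2(q)=q_0\rhd\delta(q)$ only reproduces the map $P\to F$ you already have; it gives no retraction back to $P$. To say the image ``satisfies the same universal property'' (i.e.\ is initial) is exactly equivalent to the injectivity you are trying to prove, since a priori the image is only a quotient of $P$. Likewise, the PL-action/conjugacy-problem route only distinguishes elements of $F$ from one another; injectivity requires showing that two elements of $P$ which become \emph{equal} in $F$ were already equal in $P$, and for that you need some independent grip on equality in $P$, which a faithful action of $F$ cannot supply by itself.

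The paper's mechanism, which your proposal is missing, is to use the $\Gr(P)$-equivariance of the canonical map: by Proposition~\ref{prop:orbits}, $P\cong F/F_a\sqcup F/F_b$ as an $F$-set (stabilisers $F_a$, $F_b$), the image is $F/\rmC_F(a)\sqcup F/\rmC_F(b)$ (the two conjugacy classes are disjoint because $a$ and $b$ are independent in the abelianisation $\bbZ^2$), and the map is induced by the inclusions $F_a\leqslant\rmC_F(a)$ and $F_b\leqslant\rmC_F(b)$. Injectivity thus reduces to showing these inclusions are equalities, which follows from the computation (Guba--Sapir, Belk--Matucci) that $\rmC_F(a)=\langle a\rangle$ and $\rmC_F(b)=\langle b\rangle$ are infinite cyclic: since $a\rhd a=a$, the element $a$ already lies in $F_a$, forcing $\langle a\rangle\leqslant F_a\leqslant\rmC_F(a)=\langle a\rangle$. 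Your write-up names the centralisers nowhere, and without that input the ``ruling out collapses'' step you flag as the crux cannot be completed.
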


\begin{proof}
From Proposition~\ref{prop:orbits}, we know that Thompson's quandle~$P$ has precisely two orbits, represented by~$p_0=a$ and~$p_1=b$. Identifying the enveloping group~$\Gr(P)$ of~$P$ with Thompson's group~$F$, we can use its action on~$P$ to describe~$P$ as the disjoint union~\hbox{$P\cong F/F_a\sqcup F/F_b$} of homogeneous~$F$--set, where the subgroups~$F_a$ and~$F_b$ of~$F$ are the stabilisers of~$a$ and~$b$ for the~$F$--action on~$P$. The canonical map~$P\to\Gr(P)\cong F$ is~$F$ invariant when~$F$ acts on itself via conjugation, and the image consists of the conjugates of the generators~$a$ and~$b$ inside~$F$. This is the disjoint union of the conjugacy classes of~$a$ and~$b$. We can similarly identify it with~$F/\rmC_F(a)\sqcup F/\rmC_F(b)$, where the subgroups~$\rmC_F(a)$ and~$\rmC_F(b)$ of~$F$ are the stabilisers of~$a$ and~$b$ for the~$F$--action on itself via conjugation, i.e., the centralisers of~$a$ and~$b$, respectively. The map in question is now given on orbits by the canonical maps~$F/F_a\to F/\rmC_F(a)$ and~$F/F_b\to F/\rmC_F(b)$, originating from the inclusions 
$F_a\leqslant\rmC_F(a)$ and~$F_b\leqslant\rmC_F(b)$. It now suffices to show that these inclusions are equalities, or in other words, that the centraliser of~$a$ in~$F$ stabilises~$a$ in~$P$, and similarly for~$b$. 

The centralisers in Thompson's groups have been computed first by~Guba--Sapir~\cite[Cor.~15.36]{Guba--Sapir}. We can deduce that~$\rmC_F(a)\cong\langle a\rangle$ and~$\rmC_F(b)\cong\langle b\rangle$ are infinite cyclic using~\cite[Prop.~4.1]{Belk--Matucci:2}, which says that this is the case for all elements~$f\in F$ that have no proper roots and whose reduced annular strand diagram is connected. The latter property is evident from~\cite[Fig.~5]{Belk--Matucci:2}, and the former follows from the fact that~$a$ and~$b$ represent basis elements of the abelianisation of~$F$, which is isomorphic to~$\bbZ^2$.
\end{proof}


\section{The Alexander module}\label{sec:Alexander}

For any quandle~$Q$, the abelianisation of the enveloping group is generated as an abelian group by the elements of~$Q$ modulo the relation~\hbox{$y\sim x\rhd y$}. This implies that the abelianisation is the free abelian group on the set of orbits of~$Q$. For Thompson's quandle~$P$, we have seen in Proposition~\ref{prop:orbits} that there are two orbits: the one through~$a$ and the one through~$b$, and we recover the fact that the abelianisation of Thompson's group~$F$ is isomorphic to~$\bbZ^2$. 

Instead of the abelianisation of the enveloping group, a finer invariant of a quandle is its Alexander module. This is an abelian group object in the category of quandles, or equivalently, a~$\bbZ[q^{\pm 1}]$--module. The Alexander module of a quandle~$Q$ given by a presentation~$\langle\,S\,|\,R\,\rangle$ can be computed as the~$\bbZ[q^{\pm 1}]$--module generated by~$S$ subject to the relations obtained from~$R$ by replacing~$x\rhd y$ with~$(1-q)x+qy$

\begin{proposition}
The Alexander module of Thompson's quandle~$P$, as a~$\bbZ[q^{\pm 1}]$--module, is isomorphic to~$\bbZ[q^{\pm 1}]\oplus\bbZ$.
\end{proposition}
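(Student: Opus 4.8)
The plan is to apply the recipe from the preceding paragraph to the \emph{finite} presentation of Proposition~\ref{prop:fin_gen}, with generators $a,b$ and relations~\eqref{first} and~\eqref{second}. Since the substitution sending $x\rhd y$ to $(1-q)x+qy$ is $\bbZ[q^{\pm 1}]$-linear in each variable, computing the images of the two defining relations is a direct expansion: one first records that $a\rhd b$ maps to $(1-q)a+qb$ and that the common subexpression $c=a\rhd(a\rhd b)$ maps to $(1-q^2)a+q^2b$, and then substitutes these into~\eqref{first} and~\eqref{second}.

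Carrying this out, relation~\eqref{first} becomes, after collecting coefficients and cancelling, the single equation $(1-q)(a-b)=0$, and relation~\eqref{second} reduces to $(1-q)(a-b)=0$ as well. (This parallels the observation in the proof of Proposition~\ref{prop:fin_gen} that, inside the quandle, the relations~\eqref{first} and~\eqref{second} play symmetric roles.) Hence the Alexander module of~$P$ is the $\bbZ[q^{\pm 1}]$-module on generators $a,b$ subject to the single relation $(1-q)(a-b)=0$.

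To put this in the asserted form, I would change basis in the free module $\bbZ[q^{\pm 1}]^{2}$ from $\{a,b\}$ to $\{a-b,\,b\}$; this is an invertible integral change of coordinates, hence invertible over $\bbZ[q^{\pm 1}]$. In the new basis the relation involves only the first generator, so the module decomposes as
\[
\bigl(\bbZ[q^{\pm 1}]/(1-q)\bigr)\oplus\bbZ[q^{\pm 1}]\cong\bbZ\oplus\bbZ[q^{\pm 1}],
\]
where the first summand is identified with~$\bbZ$ by evaluation at $q=1$. This is the claimed isomorphism with $\bbZ[q^{\pm 1}]\oplus\bbZ$.

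No real obstacle arises; the only point requiring care is the bookkeeping in the two polynomial expansions, and in particular the verification that~\eqref{second} contributes nothing beyond what~\eqref{first} already gives. As an independent check one may instead run the substitution on the original infinite presentation: the relation~($R_{j,k}$) becomes $p_{k+1}=(1-q)p_j+qp_k$, which for $k\geqslant 1$ expresses $p_k$ through $p_0$ and~$p_1$ (explicitly $p_k=(1-q^{k-1})p_0+q^{k-1}p_1$), while equating the instances $j=0$ and $j=1$ for any fixed $k\geqslant 2$ yields precisely $(1-q)(p_0-p_1)=0$; this recovers the same presentation of the module.
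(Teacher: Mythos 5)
Your proposal is correct and follows essentially the same route as the paper: apply the linearisation recipe to the finite presentation of Proposition~\ref{prop:fin_gen}, observe that both relations reduce to $(1-q)(a-b)=0$ (the paper likewise notes that~\eqref{second} has the form $a\rhd x=b\rhd x$ and is therefore redundant), and then change basis to split off $\bbZ[q^{\pm 1}]/(1-q)\cong\bbZ$. The supplementary check via the infinite presentation is a nice extra but not needed.
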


\begin{proof}
For Thompson's quandle~$P$, we find that the Alexander module is a quotient of the free~$\bbZ[q^{\pm 1}]$--module with basis given by~$a$ and~$b$; the two relations come from~\eqref{first} and~\eqref{second}. 
The first relation~\eqref{first} becomes 
\[
(1-q)a+q((1-q)a+qb)=(1-q)b+q((1-q)a+qb),
\]
which is equivalent to
\begin{equation}\label{single}
(1-q)(a-b)=0.
\end{equation}
The second relation~\eqref{second} also has the form~$a\rhd x=b\rhd x$, which is equivalent to
\[
0=(a\rhd x)-(b\rhd x)=((1-q)a+qx)-((1-q)b+qx)=(1-q)(a-b),
\]
and is, therefore, implied by the first relation. 
The Alexander module is, therefore, generated by~$a$ and~$b$, subject to the single relation~\eqref{single}. As we can change basis from~$(a, b)$ to~$(a,a-b)$, we see that the result is
\[
\bbZ[q^{\pm 1}]\oplus(\bbZ[q^{\pm 1}]/(1-q))=\bbZ[q^{\pm 1}]\oplus\bbZ,
\]
as claimed.
\end{proof}


\section*{Acknowledgement}

I thank Nadia Mazza for inviting me to speak at the Functor Categories for Groups
meeting in Lancaster in April 2023 and Jim Belk for giving an inspiring presentation on finite germ extensions on that occasion. I thank Victoria Lebed for her feedback and valuable pointers to the literature.


\vfill

School of Mathematics and Statistics, The University of Sheffield, Sheffield S3 7RH, UNITED KINGDOM,\\
\href{mailto:m.szymik@sheffield.ac.uk}{m.szymik@sheffield.ac.uk}

Department of Mathematical Sciences, NTNU Norwegian University of Science and Technology, 7491 Trondheim, NORWAY\\
\href{mailto:markus.szymik@ntnu.no}{markus.szymik@ntnu.no}


\end{document}